\newtheorem{thm}{Theorem}[section]
\newtheorem{lemma}[thm]{Lemma}
\newtheorem{claim}[thm]{Claim}
\theoremstyle{definition}
\newtheorem{defn}[thm]{Definition}
\theoremstyle{remark}
\newtheorem{remark}{Remark}[section]
\begin{document}

\title{A random link via bridge position is hyperbolic}

\author{Kazuhiro Ichihara}
\address{Department of Mathematics, College of Humanities and Sciences, Nihon University, 3-25-40 Sakurajosui, Setagaya-ku, Tokyo 156-8550, Japan}
\email{ichihara@math.chs.nihon-u.ac.jp}

\author{Jiming Ma}
\address{School of Mathematical Sciences, Fudan University, Shanghai, 200433, P. R. China}
\email{majiming@fudan.edu.cn}

\keywords{random link, random walk, bridge presentation, hyperbolic}

\subjclass[2010]{Primary 57M25; Secondary 20F36, 60G50}

\date{\today}

\thanks{Ichihara was partially supported by JSPS KAKENHI Grant Number 26400100. Jiming Ma was partially supported by  NSFC  11371094.}

\begin{abstract} We show that a random link defined by random bridge splitting is hyperbolic with asymptotic probability 1.\end{abstract}


\maketitle

\section{Introduction}

In recent low-dimensional topology, to study 3-manifolds and knot via \textit{random methods} could be a hot topic now. 

As a pioneering work, in \cite{DunfieldThurston:2006}, Dunfield and Thurston introduced a random model of 3-manifolds by using random walks on the mapping class group of a surface, and a theory of random 3-manifolds has started. 
Actually they considered random Heegaard splittings by gluing a pair of handlebodies by the result of a random walk in the mapping class group. 

Later random Heegaard splittings are studied extensively by Maher in \cite{Maher:2010}. 
In particular, he showed that a 3-manifold obtained by a random Heegaard splitting is hyperbolic with asymptotic probability 1.

As a natural generalization, a random bridge decomposition for a link in the 3-sphere $S^3$ was considered and studied by the second author in \cite{Ma:2013}. 
There he computed the expected value of the number of components of a random link, via the random braid model and the random bridge presentation model. 
A further study to this for random braid model is given in \cite{IchiharaYoshida:2015}. 

Moreover, for random braid model, Ma showed in \cite{Ma:2014} that, for a random walk on the braid group, the probability that the link appearing as the braid closure is hyperbolic converges to 1. 

In this paper, as a generalization to the result of Maher, and a dual result of Ma, we show that a link obtained by a random bridge decomposition is hyperbolic with asymptotic probability 1. 

To state our result precisely, we set up our terminology. 
Let $B = B_+$ be a 3-ball with trivial (i.e., boundary-parallel) $n$-strings $\tau_+$ in it, and $B_-$ and $\tau_-$ their mirror images. 
Regarding $(B_*, \tau_*)$ as the trivial tangle, the boundary of $(B_* ,\tau_*)$, meaning that $\partial B_* - \tau_*$, is a $2n$-punctured sphere $S_{0,2n}$ for $*=+$ or $-$. 
Let us denote by $\mathfrak{M}_{0,2n}$ the (orientation-preserving) mapping class group of $S_{0,2n}$. 
Then, for $\varphi  \in \mathfrak{M}_{0,2n}$ with a representative $f : S_{0,2n} \to S_{0,2n}$, 
$(B_+ - \tau_+) \cup_f (B- - \tau_-)$ gives the complement of a link $L$ in the 3-sphere $S^3$ so that the sets of strings $\tau_+$ and $\tau_-$ are glued together to get $L$. 
We denote by $L = \bar{\varphi}$, and call it a link with an $n$-bridge presentation
$(B_+, \tau_+) \cup_\varphi (B_- , \tau_-)$. 

Now the statement of our main result is the following. 

\begin{thm} \label{thm: randomlinkhyp}  
Let $w _{k}$ be a random walk in the mapping class group $\mathfrak{M}_{0,2n}$ of $2n$-punctured sphere for $n \ge 3$. 
Suppose that $w _{k}$ is generated by a finitely supported probability distribution $\mu$ on $\mathfrak{M}_{0,2n}$, 
whose support generates a semi-group containing a complete subgroup. 
Then the probability that the link $\overline{w _{k}}$ is hyperbolic tends to $1$ as $n \to \infty$. 
\end{thm}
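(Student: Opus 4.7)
The plan is to adapt Maher's argument for random Heegaard splittings \cite{Maher:2010} to the bridge-splitting setting. The strategy combines two essentially independent ingredients: a hyperbolicity criterion phrased via a \emph{bridge distance} in the curve complex of the bridge sphere, and a linear-growth result for that distance along the random walk.

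To each trivial tangle $(B_\pm, \tau_\pm)$ one associates its \emph{disk set} $\mathcal{D}_\pm \subset \mathcal{C}(S_{0,2n})$, consisting of isotopy classes of essential simple closed curves on $S_{0,2n}$ that bound compressing disks in $B_\pm \setminus \tau_\pm$. Set
\[
d(\overline{\varphi}) := d_{\mathcal{C}(S_{0,2n})}\bigl(\mathcal{D}_+,\, \varphi(\mathcal{D}_-)\bigr).
\]
By bridge-distance analogues of Hempel's theorem, due to Bachman--Schleimer and Tomova, there is an explicit threshold $D = D(n)$ such that $d(\overline{\varphi}) > D$ forces the exterior of $\overline{\varphi}$ to be irreducible, boundary-irreducible, atoroidal, anannular, and not Seifert fibered; Thurston's geometrization then yields hyperbolicity, and in particular $\overline{\varphi}$ is neither split, composite, a torus link, nor a satellite link.

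For the random-walk side, the curve complex $\mathcal{C}(S_{0,2n})$ is Gromov-hyperbolic by Masur--Minsky, and the hypothesis on $\mu$---its support generating a semi-group containing a complete subgroup, so that sufficiently many pseudo-Anosov elements are available---places us in the Maher--Tiozzo framework for random walks on groups acting on Gromov-hyperbolic spaces. This framework provides $L > 0$ with
\[
\Pr\!\bigl[\, d_{\mathcal{C}(S_{0,2n})}(o,\, w_k o) \ge L k \,\bigr] \longrightarrow 1 \quad \text{as } k \to \infty,
\]
for any fixed basepoint $o$. Combined with the quasi-convexity of disk sets in $\mathcal{C}(S_{0,2n})$ (Masur--Minsky) and standard nearest-point projection estimates, this lifts to linear growth of $d(\overline{w_k})$; in particular $d(\overline{w_k})$ exceeds the threshold $D$ with probability tending to $1$, completing the proof.

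The step I expect to be hardest is the hyperbolicity criterion itself: while Bachman--Schleimer-type bounds handle incompressibility and atoroidality from large bridge distance, excluding every Seifert-fibered or $I$-bundle exception a link exterior in $S^3$ might realize requires delicate bookkeeping. The assumption $n \ge 3$ presumably enters at this point, guaranteeing both that $\mathcal{C}(S_{0,2n})$ is connected and that the Masur--Minsky hyperbolicity theory applies, while also providing enough room in the bridge surface to invoke the Bachman--Schleimer/Tomova machinery cleanly.
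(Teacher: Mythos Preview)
Your overall architecture matches the paper's: a bridge-distance hyperbolicity criterion combined with linear growth of that distance along the random walk. But you have located the difficulty in the wrong place, and the step you label ``standard'' is in fact the main content of the paper.

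The passage from linear growth of $d_{\mathcal{C}}(o, w_k o)$ to linear growth of $d_{\mathcal{C}}(\mathcal{D}_+, w_k \mathcal{D}_-)$ is \emph{not} a consequence of quasi-convexity plus nearest-point projection. The disk sets $\mathcal{D}_\pm$ have infinite diameter in $\mathcal{C}(S_{0,2n})$ (the Hilden group acts on them with unbounded orbits), so $w_k o$ being far from $o$ says nothing about $w_k o$ being far from $\mathcal{D}_+$. What Maher's linear-progress theorem actually requires, beyond quasi-convexity of $X$, is that the limit set of $X$ in $\mathcal{PML}(S_{0,2n})$ have harmonic and reflected-harmonic measure zero; the Maher--Tiozzo framework does not remove this hypothesis. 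The paper's central technical contribution is precisely this measure-zero statement: one identifies the limit set of the disk set with the limit set of the Hilden group, then adapts Masur's and Kerckhoff's train-track arguments (with Gadre's correction) from the handlebody setting to the trivial-tangle setting via ``admissible systems'' of half-disks, obtaining first Lebesgue-measure zero and then harmonic-measure zero. Your sketch contains no analogue of this step. Conversely, the hyperbolicity criterion you flag as hardest is in fact soft: the threshold is $D=3$, independent of $n$, and follows directly from Bachman--Schleimer together with Jang's identification of the arc-and-curve and curve-complex bridge distances for links in $S^3$; no case-analysis of Seifert-fibered exceptions is needed. A minor point: quasi-convexity of $\mathcal{D}_\pm$ is not literally in Masur--Minsky (their theorem is for handlebodies); the paper checks that the nested curve-replacement argument survives on a punctured sphere bounding a trivial tangle.
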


Here a subgroup of the mapping class group is called \textit{complete} if  the endpoints of its pseudo-Anosov elements are dense in the space of projective measured foliations on the surface. 
We here omit the details about the mapping class groups on surfaces and the random walks on groups. 
Please refer to \cite[Section 2]{Maher:2010}. 

We remark that we except the special case $n=2$, that is, the 2-bridge link case. 
For technical reason, it should be treated separately, but one could obtain the same result in the same way. 

Our proof is similar to the approach of Maher  \cite{Maher:2010},  but  in the approach, we show two results on the Hilden groups, which are similar to results in the handlebody groups, which should be interesting and deserve to write down for the purpose of knot  theory.

\section{Outline of Proof}

In this section, we give an outline of the proof of Theorem \ref{thm: randomlinkhyp}. 
Our proof just runs along the same line as the proof of \cite[Theorem 1.1]{Maher:2010} by Maher, which presents the same result for closed 3-manifolds. 

As in Theorem \ref{thm: randomlinkhyp}, let $w _{k}$ be a random walk in the mapping class group $\mathfrak{M}_{0,2n}$ of $2n$-punctured sphere for $n \ge 3$. 
We suppose that $w _{k}$ is generated by a finitely supported probability distribution $\mu$ on $\mathfrak{M}_{0,2n}$, whose support generates a semi-group containing a complete subgroup. 

We primarily want to apply the following result due to Maher to our setting. 

\begin{quote}
\cite[Theorem 5.5]{Maher:2010} 
Let $G$ be the mapping class group of a orientable surface. 
Consider a random walk generated by a finitely supported probability distribution $\mu$ on $G$, and 
whose support generates a semi-group containing a complete subgroup of $G$. 
Let $X$ be a quasi-convex subset of the relative space $\hat{G}$, 
whose limit set has measure zero with respect to both harmonic measure and reflected harmonic measure. 
Then there is a constant $\ell > 0$ such that
$$
\mathbb{P} \left(\left| \frac{1}{n} \hat{d} ( X , w_n X ) - \ell \right| \le  \epsilon \right) \to 1 \mbox{ as } n \to \infty ,
$$
for all $\epsilon > 0$, where $\mathbb{P}$ is the probability measure determined by the probability distribution $\mu$ and $\hat{d} ( X , w_n X )$ is the minimum distance between $X$ and $w_n X$.
\end{quote}

\begin{remark}
In \cite{Maher:2010}, this theorem, Theorem 5.5, is originally stated with the assumption that the surface considered is closed. 
However, actually for the proof of Theorem 5.5, we do not need closedness of the surface. 
It works fine for surfaces with boundary as long as X has the required properties.
\end{remark}

Here we recall definitions used above briefly. 

The \textit{relative space} $\hat{G}$ of the mapping class group $G$ is defined as follows. 
Let $\{ \alpha_1, \dots , \alpha_n \}$ be a list of representatives of orbits of simple closed curves on the surface under the action of $G$. 
Let $H_i = \mbox{fix} (\alpha_i)$ be the subgroup of $G$ fixing $\alpha_i$, and $\mathcal{H} = \{ H_i \}$ the collection of the subgroups $H_i$'s. 
With a generating set $A$ of $G$, we define the relative length of an element $g \in G$ to be the length of the shortest word in the infinite generating set $A \cup \mathcal{H}$. 
This defines a metric $\hat{d}$ on $G$ called the \textit{relative metric}, and 
we call the group $G$ with the relative metric $\hat{d}$ the \textit{relative space} denoted by $\hat{G}$. 
Remark that any two different choices of finite generating set give quasi-isometric relative metrics. 

Based on a work of Kaimanovich and Masur \cite{KaimanovichMasur:1996}, it is shown by Maher \cite{Maher:2011} that a random walk on the mapping class group $G$ on a surface $F$ converges to a uniquely ergodic foliation (lamination) in the Gromov boundary of the curve complex of $F$ almost surely. 
Since the subset of the uniquely ergodic foliation (lamination) is regarded as a subset of the space of projective measured foliations $\mathcal{PML}(F)$, we see that a random walk on the mapping class group $G$ converges $\mathcal{PML}(F)$ almost surely. 
This defines a measure $\nu$, known as \textit{harmonic measure}, on $\mathcal{PML}(F)$. 
Actually the measure $\nu$ of a subset is just defined as the probability that a sample path of the random walk converges to a foliation contained in that subset. 
We remark that this measure depends on the choice of probability distribution used to generate the random walk. 
For example, if we consider the reflected random walk generated by $\mu (g) = \mu(g^{-1})$, then if $\mu$ is not symmetric, this may produce a different harmonic measure, which we shall call the \textit{reflected harmonic measure}.

\bigskip

Following the proof of \cite[Theorem 1.1]{Maher:2010} by Maher, we want to consider \textit{the disk set of the curve complex} as the set $X$ above. 
To do so, we prepare a lemma, which is independent from the other arguments on random walks and mapping class groups. 

As above, let  $T=(B^3,\tau)$ be a trivial tangle with $n$-strands with boundary $\partial T = \partial B^3 - \tau$, which is regarded as $S_{0,2n}$, a sphere with $2n$-punctures. 
In the curve complex $\mathcal{C}(\partial T)$, the full subcomplex spanned by the vertices represented by simple closed curves bounding a disk in $B^3 - \tau$ is called \textit{the disk set} and denoted by $D(T)$. 
See Section \ref{sec:bridgedistance} for precise definition. 

Then, to apply \cite[Theorem 5.5]{Maher:2010} with $D(T)$ as $X$, we will show the following in Section \ref{sec:Quasi-convexity}. 

\medskip
\noindent
\textbf{Lemma \ref{lemma:quasiconvex}}. 
The disk set $D(T)$ is quasi-convex in  the curve complex $\mathcal{C}(\partial T)$. 
\bigskip

We here recall why the disk set of the curve complex is applicable as the set $X$ in \cite[Theorem 5.5]{Maher:2010}. 
Actually, since by \cite[Lemma 3.2]{MasurMinsky:2004}, the relative space $\hat{G}$ is quasi-isometric to the curve complex. 
Thus we can define the disk set in $\hat{G}$ as the image under the quasi-isometry from $\mathcal{C}(\partial T)$ to $\hat{G}$. 
This disk set in $\hat{G}$, which we denote by $D(T)$ by abusing the notation, is also quasi-convex in $\hat{G}$ by the above lemma. 

\medskip

Now, the following is a key theorem in this paper, which is proved by considering the Hilden groups, and will be shown in Section \ref{sec:Hildengroup}. 

\medskip
\noindent
\textbf{Theorem 4.1}. 
Let $\mu$ be a finitely supported probability distribution on the mapping class group $\mathfrak{M}_{0,2n}$ of a $2n$-punctured sphere, whose support generates a semi-group containing a complete subgroup. 
Then the limit set of the disk set $D(T)$ in the boundary of the relative space $\hat{G}$ has measure zero with respect to both harmonic measure and reflected harmonic measure determined by random walk on $\mathfrak{M}_{0,2n}$ generated by $\mu$. 
\medskip

By virtue of these, we can apply \cite[Theorem 5.5]{Maher:2010} to obtain that 
$\hat{d} ( D(T) , w_n D(T) )$ grows linearly as $n \to \infty $ almost surely. 

Now let $T=(B_+,\tau_+)$ be the trivial tangle, and $T'=(B_-,\tau_-)$ be the mirror image of $T$. 
Then any $\varphi \in \mathcal{M}_{0,2n}$, $T \cup_\varphi  T' = (B_+,\tau_+) \cup_\varphi (B_-,\tau_-)$ gives a pair $(S^3,L)$ for a link $L = \overline{\varphi} \subset S^3$. 

Since the relative space $\hat{G}$ is quasi-isometric to the curve complex, $\hat{d}$ on $\hat{G}$ and $d$ on $\mathcal{C}(\partial T)$ are comparable. 
Actually $\hat{d} ( D(T) , w_n D(T) )$ is comparable to the \textit{Hempel distance} of the bridge splitting for the link $L$ with gluing map corresponding to $w_n$. 
See Section \ref{sec:bridgedistance} for precise definitions. 
Therefore the following completes our proof of Theorem \ref{thm: randomlinkhyp}, which will be shown in Section \ref{sec:bridgedistance}. 

\medskip
\noindent
\textbf{Theorem \ref{thm: largedistancehyp}}. 
If the Hempel distance $d(L,F)$ of a link $L$ in $S^3$ with respect to a bridge sphere $F$ is at least 3, then $L$ is a hyperbolic link. 
\medskip

In the rest of paper, we will give proofs of the above three results.

\section{Quasi-convexity of $D(T)$ in $\partial T$ }\label{sec:Quasi-convexity}

In this section, we show that the disk set $D(T)$ is quasi-convex in  the curve complex $\mathcal{C}(\partial T)$ for the trivial tangle $T$. 

Actually we prove the following, which is shown similarly as \cite[Theorem 1.1]{MasurMinsky:2004}. 

\begin{lemma}  \label{lemma:quasiconvex} 
The disk set $D(T)$ is quasi-convex in  the curve complex $\mathcal{C}(\partial T)$.
\end{lemma}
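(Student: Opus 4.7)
The plan is to mirror the proof of \cite[Theorem 1.1]{MasurMinsky:2004}, which establishes quasi-convexity of the disk set of a handlebody in the curve complex of its boundary. Our setting is closely analogous: the tangle complement $B^3 \setminus \tau$ deformation retracts onto the genus-$n$ handlebody $V = B^3 \setminus N(\tau)$, and the $2n$-punctured sphere $S_{0,2n} = \partial T$ sits inside $\partial V$ (after compactifying each puncture to a boundary circle) as an incompressible subsurface whose complement is a union of $n$ annuli coming from $\partial N(\tau)$. A curve on $S_{0,2n}$ bounds a disk in $B^3 \setminus \tau$ if and only if it bounds a compressing disk in $V$, so $D(T)$ corresponds to the subset of the disk set of $V$ consisting of curves that can be isotoped off these complementary annuli.

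The core technical ingredient is a disk-surgery step. Given a compressing disk $D \subset B^3 \setminus \tau$ with $\partial D = \delta$ and an essential curve $\beta$ on $S_{0,2n}$ with $i(\delta, \beta) > 0$, take an outermost arc of $\beta$ lying in one complementary region of $\delta$ on $S_{0,2n}$; this is a wave for $D$. Surgering $\delta$ along this wave produces a new essential simple closed curve $\delta'$ on $S_{0,2n}$. An innermost-disk analysis applied to $D$---the standard disk-replacement argument for handlebodies---shows that $\delta'$ still bounds a compressing disk $D'$ in $B^3 \setminus \tau$, and plainly $i(\delta', \beta) < i(\delta, \beta)$. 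Iterating eventually produces a compressing disk whose boundary is disjoint from $\beta$.

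With this surgery in hand, I would follow the Masur-Minsky strategy to show that for any geodesic $\alpha_0, \alpha_1, \ldots, \alpha_N$ in $\mathcal{C}(\partial T)$ with endpoints in $D(T)$, every vertex $\alpha_i$ lies within a uniformly bounded curve-complex distance of $D(T)$. The main technical obstacle, and the heart of the Masur-Minsky argument, is that the naive iterative surgery can drift the boundary curve far from the geodesic; one controls this through a careful analysis using nested sequences of train tracks carrying the disk-bounding curves, combined with the hyperbolicity of the curve complex. All of these ingredients are available for the $2n$-punctured sphere when $n \geq 3$, so the proof transfers. The combinatorial portion is adapted only notationally---essential curves on $S_{0,2n}$ are required to be non-peripheral to each puncture---but the arguments go through verbatim and yield the desired uniform quasi-convexity constant.
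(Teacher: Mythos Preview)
Your proposal is correct and follows essentially the same approach as the paper: both reduce to checking that Masur--Minsky's nested curve replacement argument (their Proposition~2.1) goes through in the tangle setting, after which the rest of the proof of \cite[Theorem~1.1]{MasurMinsky:2004} carries over unchanged. The paper is slightly more explicit about the one point you gloss over---why each surgered curve remains non-peripheral---observing that a curve bounding a compressing disk in $B^3\setminus\tau$ cannot encircle a single puncture, since each puncture is an endpoint of an arc of $\tau$.
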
 

\begin{proof}
In \cite[Theorem 1.1]{MasurMinsky:2004}, 
Masur and Minsky showed $D(H)$ s quasi-convex in  $ \mathcal{C}(\partial H)$ for a  handlebody $H$. 
In their arguments, the results used to prove \cite[Theorem 1.1]{MasurMinsky:2004} can be valid for a punctured surface except for \cite[Proposition 2.1]{MasurMinsky:2004}, which is stated as follows: 

On a surface $S$ of genus $g$ with $n$ punctures, assuming $3g - 3 + n > 1$, 
given any two essential curves $a$ and $b$ in minimal position, and an interval $J_0\subset a$ containing $a \cap b$, 
there exists a nested curve replacement sequence $\{(a_i,J_i)\}$
such that
\begin{itemize}
\item  $a_i$ is not peripheral for all $i$.    
\item If $S$ is a boundary component of a compact 3-manifold $M$ and
$a$ and $b$ are boundaries of compressing discs, then the $a_i$
can be chosen to be boundaries of compressing discs. 
\item The sequence terminates with $a_n$ homotopic to $b$.
\end{itemize}

Here we omit the definition of a nested curve replacement sequence. 
See \cite[Section 2]{MasurMinsky:2004} for details. 

Now we only need to check that the assertion corresponding to the second above also holds in our situation. 

\begin{claim} \label{claim:curvereplacement} 
For any given $a$ and $b$ in $D(T)$, there is a sequences of boundary curves of disks $a_{i}$, $a_{0}=a$, $a_{k}=b$, and $a_{i+1}$ is obtained from $a_{i}$ by a curve replacement. 
\end{claim}

\begin{proof}
We just follow the argument in the proof of \cite[Proposition 2.1]{MasurMinsky:2004}. 
There a sequence of curves is constructed inductively by using a wave curve replacement. 
In their proof, the non-peripheral property for each curve is assured automatically since the surface $S$ they consider is closed. 
On the other hand, in our situation, we can see that each curve is non-peripheral because the curve is a boundary of a disk which is boundary parallel in $T$ and a puncture appears as an end point of an arc in $\tau$. 
All the other parts of their proof are applicable. 
\end{proof}

Now by Claim \ref{claim:curvereplacement}, Theorem 1.2 of \cite{MasurMinsky:2004} and the proof of Theorem 1.1 of \cite{MasurMinsky:2004},  
Lemma \ref{lemma:quasiconvex} follows.
\end{proof}

\section{Limit set of Hilden group} \label{sec:Hildengroup}
In this section, we give a proof of the following. 

\begin{thm}\label{them:41}
Let $\mu$ be a finitely supported probability distribution on the mapping class group $\mathfrak{M}_{0,2n}$ of a $2n$-punctured sphere, whose support generates a semi-group containing a complete subgroup. 
Then the limit set of the disk set $D(T)$ in the boundary of the relative space $\hat{G}$ has measure zero with respect to both harmonic measure and reflected harmonic measure determined by random walk on $\mathfrak{M}_{0,2n}$ generated by $\mu$. 
\end{thm}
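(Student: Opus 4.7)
The plan is to follow the strategy that Maher employs for the handlebody subgroup in \cite{Maher:2010}, replacing the handlebody group by the \emph{Hilden group} $\mathcal{H}_{n} \le \mathfrak{M}_{0,2n}$, that is, the subgroup of mapping classes of $S_{0,2n} = \partial T$ that admit representatives extending to self-homeomorphisms of the trivial tangle $T = (B^{3}, \tau)$. By construction, the disk set $D(T) \subset \mathcal{C}(\partial T)$ is $\mathcal{H}_{n}$-invariant; combined with Lemma \ref{lemma:quasiconvex}, this means $D(T)$ is a quasi-convex $\mathcal{H}_{n}$-invariant subset of the $\delta$-hyperbolic curve complex, so its limit set $\Lambda(D(T)) \subset \mathcal{PML}(S_{0,2n})$ is a well-defined closed $\mathcal{H}_{n}$-invariant subset and, via the quasi-isometry with $\hat{G}$, also defines the limit set in the Gromov boundary of the relative space.

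The central step is to prove that $\Lambda(D(T))$ is a \emph{proper} subset of $\mathcal{PML}(S_{0,2n})$. A natural route is to pass to the branched double cover of $(B^{3}, \tau)$ along $\tau$: this presents $S_{0,2n}$ as the quotient of a closed genus $n-1$ surface $\Sigma$ by a hyperelliptic involution and identifies $T$ with a genus $n-1$ handlebody $H$. Under this correspondence, $\mathcal{H}_{n}$ lifts to a subgroup of the hyperelliptic handlebody group and $D(T)$ is carried into the handlebody disk set $D(H)$. The classical Masur--Minsky / Kerckhoff theorem for handlebodies provides an explicit measured foliation on $\Sigma$ not in $\Lambda(D(H))$; I would then project this foliation to $S_{0,2n}$ and argue that an open neighborhood of its image still misses $\Lambda(D(T))$. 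An alternative, more direct strategy is to construct a pseudo-Anosov $\psi \in \mathfrak{M}_{0,2n} \setminus \mathcal{H}_{n}$ by the usual train-track surgery machinery adapted to the punctured sphere, and to verify that its invariant foliations sit at positive distance from $\Lambda(D(T))$.

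With the proper containment $\Lambda(D(T)) \subsetneq \mathcal{PML}(S_{0,2n})$ in hand, the measure-zero conclusion follows from the general mechanism of \cite{Maher:2010,Maher:2011,KaimanovichMasur:1996}. The hypothesis that $\mu$ is finitely supported with support generating a semigroup containing a complete subgroup guarantees that the harmonic measure $\nu$ on $\mathcal{PML}(S_{0,2n})$ is non-atomic and is concentrated on uniquely ergodic foliations, and that sample paths converge $\nu$-almost surely. A closed proper invariant subset whose complement contains a uniquely ergodic foliation must then be $\nu$-null by the same Borel--Cantelli / tracking argument Maher uses for the handlebody disk set. Running the identical reasoning with the reflected distribution $\check{\mu}(g) = \mu(g^{-1})$ in place of $\mu$ handles the reflected harmonic measure.

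The principal obstacle I anticipate is the properness step. The double-cover transfer is appealing but subtle: not every simple closed curve on $S_{0,2n}$ lifts to a simple closed curve on $\Sigma$, and the image of $D(T)$ is only a subset of $D(H)$, so one must check that projecting a foliation from $\Sigma$ back to $S_{0,2n}$ really produces a witness that cannot be approached by $D(T)$-vertices rather than merely by some curves in $D(H)$. The direct train-track approach avoids this transfer issue but requires porting the surgery estimates of \cite{MasurMinsky:2004} carefully to the punctured-sphere setting, with attention to how arcs of $\tau$ appear as punctures in compressing-disk surgeries. Once either version of the properness step is carried out, the remaining steps are essentially a formal translation of Maher's handlebody argument to the tangle setting.
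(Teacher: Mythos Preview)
Your proposal misidentifies the central step. Proving that $\Lambda(D(T))$ is a \emph{proper} closed subset of $\mathcal{PML}(S_{0,2n})$ is not sufficient to conclude that it has harmonic measure zero, and Maher's argument in \cite{Maher:2010} does not proceed from mere properness. The limit set $\Lambda(D(T))$ is invariant only under the Hilden group, not under the full mapping class group supporting the random walk, so no ergodicity dichotomy is available; and harmonic measure is $\mu$-stationary rather than invariant, so a proper closed $\mathcal{H}_n$-invariant set can in principle carry positive $\nu$-mass. What Maher's proof of his Theorem~3.1 actually uses is his Theorem~3.2, the Kerckhoff train-track splitting statement: there exists a maximal recurrent train track which, after a bounded number of splittings, carries no lamination meeting the disk set. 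It is this quantitative input---not properness---that feeds into the Teichm\"uller-geodesic tracking/Borel--Cantelli argument you allude to.

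The paper supplies exactly this missing ingredient in the tangle setting. It introduces \emph{admissible systems} of half-disks for $T$ (the analogue of cut systems for a handlebody), characterises both $D(T)$ and the limit set of the Hilden group via returning arcs with respect to such systems (following \cite{Masur:1986}), and then proves the tangle analogue of Kerckhoff's splitting lemma: a maximal recurrent train track on $S_{0,2n}$ can be split at most $18(n-1)$ times so as to carry no lamination with a returning arc. With this in hand (together with Gadre's correction \cite{Gadre:2012} of Kerckhoff's original argument), one first obtains Lebesgue measure zero for the limit set, and then, substituting this splitting lemma for Maher's Theorem~3.2, harmonic measure zero exactly as in \cite{Maher:2010}. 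Your branched-double-cover route and your ``direct train-track'' alternative are both aimed only at producing a single foliation outside $\Lambda(D(T))$; the second idea, suitably reoriented, is close in spirit to what is actually required, but the target must be the full splitting lemma rather than bare properness.
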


To achieve this theorem, our advantage is to consider the Hilden group. 
The \textit{Hilden group} $\mathcal{T}$ is defined as the subgroup of $\mathfrak{M}_{0,2n}$ such that $\varphi \in \mathfrak{M}_{0,2n}$  lies in $\mathcal{T}$ if and only if the a representative $f$ of $\varphi$ acting on the boundary $\partial T$ of the trivial tangle $T$ extends to the whole $T$. 
We note that, from \cite[Theorem 4.6]{MccarthyPapadopoulos:1989}, and also from \cite{KinHirose:2015}, for $n \geq 3$, there are many independent pseudo-Anosov maps in $\mathcal{T}$, while $n=2$, $\mathcal{T}$ is virtually cyclic. 
Thus, in the following, we assume that $n \geq 3$. 

In fact, Theorem \ref{them:41} is obtained from the next two theorems. 
In the rest of the section, let $\mu$ be a finitely supported probability distribution on $G=\mathfrak{M}_{0,2n}$, whose support generates a semi-group containing a complete subgroup. 
Let us consider a harmonic measure on the boundary of the relative space $\hat{G}$, 
which is the space of projective measured foliations $\mathcal{PML}(S_{0,2n})$, determined by random walk on $\mathfrak{M}_{0,2n}$ generated by $\mu$. 

\begin{thm}  \label{thm:measurezeroharmonic}
The limit set of $\mathcal{T}$  has harmonic measure zero in $\mathcal{PML}(S_{0,2n})$. 
\end{thm}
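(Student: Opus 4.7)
The plan is to adapt Maher's handlebody-group argument from \cite{Maher:2010}, with the Hilden group $\mathcal{T}$ playing the role of the handlebody group and the trivial tangle $T$ replacing the handlebody. The two key ingredients ported to this setting are: (a) Lemma \ref{lemma:quasiconvex} above, which says the disk set $D(T)$ is quasi-convex in the curve complex and which implies, since $\mathcal{T}$ coarsely stabilizes $D(T)$, that $\mathcal{T}$ is a quasi-convex subset of the relative space $\hat{G}$; and (b) the fact that $\mathcal{T}$ contains many independent pseudo-Anosov mapping classes whenever $n\ge 3$, due to \cite{MccarthyPapadopoulos:1989, KinHirose:2015}, so that the limit set $\Lambda(\mathcal{T})\subset \mathcal{PML}(S_{0,2n})$ is a non-empty, perfect, closed, $\mathcal{T}$-invariant subset. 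The completeness hypothesis on $\mathrm{supp}(\mu)$ ensures that pseudo-Anosov endpoints of elements in the semigroup generated by $\mathrm{supp}(\mu)$ are dense in $\mathcal{PML}(S_{0,2n})$; in particular $\Lambda(\mathcal{T})$ is a proper subset, and there exist pseudo-Anosov elements in the random walk whose fixed points lie outside $\Lambda(\mathcal{T})$.

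With these inputs I would follow Maher's three-step outline. First, invoke \cite{KaimanovichMasur:1996} and \cite{Maher:2011} to obtain that sample paths of the random walk converge almost surely to uniquely ergodic foliations in $\mathcal{PML}(S_{0,2n})$ and that the hitting distribution $\nu$ is non-atomic and is the unique $\mu$-stationary Borel probability measure on $\mathcal{PML}(S_{0,2n})$. Second, prove the exponential-decay lemma: for every $R>0$ the probability $\mathbb{P}[\hat{d}(w_n,\mathcal{T})\le R]$ decays exponentially in $n$. This is the technical heart of the argument in \cite{Maher:2010}; it combines the quasi-convexity of $\mathcal{T}$ in $\hat{G}$, the linear drift of the random walk in the relative metric, and a Markov-chain argument that uses the pseudo-Anosovs with fixed points outside $\Lambda(\mathcal{T})$ produced above. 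Third, deduce $\nu(\Lambda(\mathcal{T}))=0$: were $\omega_\infty \in \Lambda(\mathcal{T})$ with positive probability, then under the identification of the Gromov boundary of $\hat{G}$ with $\mathcal{PML}(S_{0,2n})$ the sample path $w_n$ would have to come $\hat{d}$-close to $\mathcal{T}$ infinitely often with positive probability, directly contradicting the exponential decay.

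I expect the main obstacle to be the second step, namely reproving Maher's exponential-decay lemma with $\mathcal{T}$ in place of the handlebody group and with the punctured sphere $S_{0,2n}$ in place of a closed surface. The structural ingredients (hyperbolicity of the curve complex, quasi-isometry between $\hat{G}$ and $\mathcal{C}(S_{0,2n})$ from \cite{MasurMinsky:2004}, a rich supply of pseudo-Anosov elements, and the Kaimanovich--Masur convergence results) are all available in the punctured-sphere setting, but one must check that Maher's quantitative estimates transfer cleanly to this case. A minor point to verify along the way is that $\mathcal{T}$ does in fact coarsely stabilize $D(T)$, which should follow from a finiteness argument for $\mathcal{T}$-orbits of systems of essential disks in $(B^3,\tau)$ analogous to the one used by Masur--Minsky for handlebodies.
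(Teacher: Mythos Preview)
Your proposal takes a route that is genuinely different from the paper's, and it contains a logical inversion that I do not see how to close.

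The paper's proof of Theorem~\ref{thm:measurezeroharmonic} does \emph{not} use quasi-convexity of $D(T)$ or any exponential-decay estimate. It instead follows Maher's proof of \cite[Theorem~3.1]{Maher:2010} verbatim, substituting the train-track splitting Lemma~\ref{lemma:traintrack2} (the Hilden-group analogue of Kerckhoff's result) for \cite[Theorem~3.2]{Maher:2010}. The mechanism is: recurrent maximal train tracks give a system of nested neighborhoods in $\mathcal{PML}$ whose harmonic measures decay under splitting (by $\mu$-stationarity), and Lemma~\ref{lemma:traintrack2} says that after a uniformly bounded number of splits the resulting neighborhood misses $D(T)$. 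Together with Lemma~\ref{lemma:characterization of limit set} and the Gadre fix to Kerckhoff's argument, this gives harmonic measure zero directly. Quasi-convexity plays no role here; in the paper (and in Maher), quasi-convexity and harmonic measure zero are two \emph{independent} inputs later fed into \cite[Theorem~5.5]{Maher:2010}.

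Your plan reverses this dependency. You want to prove an exponential-decay lemma $\mathbb{P}[\hat d(w_n,\mathcal{T})\le R]\to 0$ and deduce $\nu(\Lambda(\mathcal{T}))=0$ from it. But in \cite{Maher:2010} the linear-growth/decay statements for $\hat d(\,\cdot\,,X)$ are exactly \cite[Theorem~5.5]{Maher:2010} and its ingredients, and those theorems take $\nu(\Lambda(X))=0$ as a \emph{hypothesis}, not a conclusion. So the step you label ``the technical heart of the argument in \cite{Maher:2010}'' is not available at this stage without circularity: you would be using a consequence of harmonic-measure-zero to prove harmonic-measure-zero. If you have in mind an independent route to exponential decay (say via later Maher--Tiozzo technology for group actions on hyperbolic spaces), that should be spelled out and cited, because it is not in \cite{Maher:2010}.

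There is a second gap. You assert that $\Lambda(\mathcal{T})$ is a proper subset of $\mathcal{PML}(S_{0,2n})$ because the completeness hypothesis produces pseudo-Anosov endpoints dense in $\mathcal{PML}$. Density of endpoints of elements of the ambient group says nothing about properness of $\Lambda(\mathcal{T})$; a priori $\Lambda(\mathcal{T})$ could be all of $\mathcal{PML}$. In the paper this is handled by Theorem~\ref{thm:limitset}, which identifies $\Lambda(\mathcal{T})$ with $\overline{D(T)}$ and shows it has empty interior via the Masur-style pants-decomposition argument. Your outline would need that result (or an equivalent) as an input.
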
 

\begin{thm}  \label{thm:limitset}
The limit set of $\mathcal{T}$ is equal to the closure of $D(T)$ in $\mathcal{PML}(S_{0,2n})$, which is connected and has empty interior if $n \geq 3$. 
\end{thm}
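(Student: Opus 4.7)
The plan is to prove the two assertions of the theorem in sequence: first, the set-equality $\Lambda(\mathcal{T}) = \overline{D(T)}$ in $\mathcal{PML}(S_{0,2n})$, and then the topological properties of connectedness and empty interior.

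For the inclusion $\Lambda(\mathcal{T}) \subset \overline{D(T)}$, the key observation is that $\mathcal{T}$ preserves $D(T)$: a representative $f$ of any $\varphi \in \mathcal{T}$ extends over $T$ and hence carries compressing disks to compressing disks. By \cite[Theorem 4.6]{MccarthyPapadopoulos:1989}, for $n \ge 3$ there exists a pseudo-Anosov $\phi \in \mathcal{T}$; for any fixed compressing disk $D_0 \subset T$, the orbit $\phi^k(\partial D_0) \in D(T)$ converges in $\mathcal{PML}$ to the attracting foliation $\mathcal{F}^+(\phi)$. Thus $\mathcal{F}^+(\phi) \in \overline{D(T)}$ for every pseudo-Anosov $\phi \in \mathcal{T}$, and taking closures gives $\Lambda(\mathcal{T}) \subset \overline{D(T)}$. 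For the reverse inclusion it suffices to show $D(T) \subset \Lambda(\mathcal{T})$. Given $\partial D \in D(T)$, the Dehn twist $\tau_{\partial D}$ extends over $T$ via the standard disk-twist along $D$ and hence lies in $\mathcal{T}$; composing with a fixed pseudo-Anosov $\phi \in \mathcal{T}$, the elements $\psi_k = \tau_{\partial D}^k \circ \phi$ lie in $\mathcal{T}$ and become pseudo-Anosov for all sufficiently large $k$, with $\mathcal{F}^+(\psi_k) \to \partial D$ in $\mathcal{PML}$. Hence $\partial D \in \Lambda(\mathcal{T})$.

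For connectedness, I would follow Masur's approach for handlebodies. The disk set $D(T)$, viewed as a subcomplex of $\mathcal{C}(\partial T)$ with edges joining disjoint disks, is connected: any two compressing disks in the trivial tangle can be joined by a sequence of pairwise disjoint compressing disks. Given $\mathcal{F}, \mathcal{F}' \in \overline{D(T)}$, approximate them by $\partial D_i \to \mathcal{F}$ and $\partial D_i' \to \mathcal{F}'$ in $\mathcal{PML}$, and choose paths $\gamma_i$ of compressing disks connecting $D_i$ to $D_i'$. The quasi-convexity of $D(T)$ (Lemma \ref{lemma:quasiconvex}) permits one to choose the $\gamma_i$ so that the curves along $\gamma_i$ fellow-travel a geodesic in $\mathcal{C}(\partial T)$ from $\partial D_i$ to $\partial D_i'$; passing to a Hausdorff limit of the vertex sets, taken in the compact space $\overline{D(T)} \subset \mathcal{PML}$, yields a continuum joining $\mathcal{F}$ to $\mathcal{F}'$.

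For empty interior, I would invoke Theorem \ref{thm:measurezeroharmonic} together with full support of the harmonic measure. Since the support of $\mu$ generates a semi-group containing a complete subgroup, the attracting foliations of pseudo-Anosovs in this subgroup are dense in $\mathcal{PML}(S_{0,2n})$; as the harmonic measure $\nu$ is supported on the closure of these foliations, $\nu$ has full support. Hence any closed $\nu$-null set has empty interior, and since $\overline{D(T)} = \Lambda(\mathcal{T})$ has $\nu$-measure zero by Theorem \ref{thm:measurezeroharmonic}, the conclusion follows.

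The principal obstacle is the connectedness: the combinatorial connectedness of the disk set does not automatically yield topological connectedness of its closure in $\mathcal{PML}$, since disjoint simple closed curves need not be close in the PML-topology. Bridging this gap requires the quasi-convexity Lemma \ref{lemma:quasiconvex} together with a careful analysis of how curve-complex geodesics project to $\mathcal{PML}$, ensuring that Hausdorff limits of the approximating disk-paths are genuinely connected rather than totally disconnected.
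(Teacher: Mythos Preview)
Your approach differs substantially from the paper's, which simply invokes Lemma~\ref{lemma:disk} (the returning-arc characterization of $D(T)$) and then transcribes Masur's proof of \cite[Theorem~2.1]{Masur:1986} essentially verbatim. In particular, for the set equality and connectedness the paper relies on Masur's explicit description of $\overline{D(T)}$ as the set of laminations admitting a returning arc with respect to every admissible system; connectedness is then obtained by Masur's interpolation argument in $\mathcal{PML}$, not via curve-complex combinatorics. For empty interior the paper again follows Masur directly: one fixes a pants decomposition extending the curves $c_i = \partial N(b_i)$ and exhibits an explicit curve $\varphi$ (as in Masur's proof) whose Dehn-twist orbit approximates any lamination while staying outside $\overline{D(T)}$.

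There is a genuine gap in your connectedness argument, and you essentially concede it. Combinatorial connectedness of the disk complex together with quasi-convexity in $\mathcal{C}(\partial T)$ does not yield connectedness of $\overline{D(T)}$ in $\mathcal{PML}$: the quasi-isometry between the curve complex and the relative space is far from a homeomorphism onto $\mathcal{PML}$, Hausdorff limits of finite vertex sets in a compact space need not be connected, and there is no mechanism in your outline ensuring that the limiting set is a continuum rather than, say, a Cantor set. Masur's argument avoids this entirely by working with the returning-arc criterion and constructing continuous paths of measured laminations directly in $\mathcal{PML}$; this is precisely what Lemma~\ref{lemma:disk} is set up to enable.

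Your empty-interior argument via Theorem~\ref{thm:measurezeroharmonic} is logically permissible in the paper's framework (the proof of Theorem~\ref{thm:measurezeroharmonic} does not rely on Theorem~\ref{thm:limitset}), and the full-support claim for harmonic measure does follow from stationarity plus completeness: $\mathrm{supp}(\nu)$ is a nonempty closed set invariant under the semigroup generated by $\mathrm{supp}(\mu)$, hence under the complete subgroup, hence equal to $\mathcal{PML}$. This is a legitimate alternative to the paper's direct construction, but note that it imports the entire train-track machinery of Section~\ref{sec:Hildengroup} into what is, in the paper, a self-contained adaptation of Masur's elementary argument.
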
 

First we consider Theorem \ref{thm:limitset}. 
To prove the theorem, we prepare some definitions and lemmas. 
The following are inspired by \cite{Masur:1986}.

\begin{defn} For the trivial tangle $T = (B^3 ,\tau)$ with $n$ strands $\tau$, a set of pairwisely disjoint half-disks  $\{E_{1}, E_{2}, \cdots, E_{n}\}$, i.e, $\{E_{1}, E_{2}, \cdots, E_{n}\}$ is a set of pairwisely disjoint  disks in $B^{3}$ such that for each $E_{i}$, $\partial E_{i}$ is a union of two arcs $a_i$ and $b_i$, where $a_i$ is a component of $\tau$ and $b_i$ is an arc in  $\partial B^{3}$,   is called an \emph{admissible system} for $T$. 
\end{defn}

\begin{lemma}  \label{lemma:disk}  
For a simple closed curve $\alpha$ on $\partial T$, 
$\alpha \in  D(T)$ if and only if for each admissible system  $\{E_{1}, E_{2}, \cdots, E_{n}\}$ for $T$, either $\alpha$ is disjoint from  $E_{1}, E_{2}, \cdots, E_{n}$, or there is a returning arc of $\alpha$ with respect to $\{E_{1}, E_{2}, \cdots, E_{n}\}$.
\end{lemma}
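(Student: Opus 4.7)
The plan is to adapt the classical disk-surgery argument of Masur for handlebodies to the trivial-tangle setting, noting that $B^3\setminus N(\tau)$ is a genus-$n$ handlebody and that an admissible system for $T$ corresponds to a complete system of meridian disks for this handlebody.

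For the ``only if'' direction, let $D\subset B^3-\tau$ be a disk with $\partial D=\alpha$. I would put $D$ in general position with $E_1\cup\cdots\cup E_n$ and eliminate the circle components of $D\cap\bigcup E_i$ by the standard innermost-disk argument inside $D$, using irreducibility of $B^3-\tau$. If the remaining intersection is empty, then $\alpha$ is disjoint from each $b_i$ and we are in the first alternative. Otherwise, pick an arc $\gamma\subset D\cap E_i$ that is outermost on $D$; then $\gamma$ cobounds a subdisk of $D$ with a subarc $\delta$ of $\alpha$ whose endpoints lie on $b_i$ and whose interior is disjoint from $\bigcup E_j$. This $\delta$ is exactly a returning arc.

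For the ``if'' direction, I would choose an admissible system $\{E_i\}$ that minimises $|\alpha\cap(b_1\cup\cdots\cup b_n)|$ among all admissible systems. If this minimum is zero, cutting $B^3\setminus N(\tau)$ along $\{E_i\}$ produces a $3$-ball whose boundary sphere contains $\alpha$ as a simple closed curve; any simple closed curve on $S^2$ bounds a disk, and pushing that disk slightly into the ball gives a compressing disk for $\alpha$ in $B^3-\tau$, so $\alpha\in D(T)$. If the minimum is positive, the hypothesis provides a returning arc $\delta$ with endpoints $p,q$ on some $b_i$. I would then carry out the standard boundary-compression: pick an arc $\gamma$ in the interior of $E_i$ from $p$ to $q$, split $E_i$ along $\gamma$ into two subdisks (one containing $a_i$), and band-sum the subdisk containing $a_i$ with a thin disk $D_0\subset B^3$ co-bounded by $\gamma$ and $\delta$, then push the result slightly off $E_i$ and off $\alpha$. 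This gives a new half-disk $E_i^{\mathrm{new}}$, disjoint from the other $E_j$, and hence a new admissible system with $b_i$ replaced by $b_i^{\mathrm{new}}$, where $b_i^{\mathrm{new}}$ agrees with $b_i$ away from the segment between $p$ and $q$ and is replaced along that segment by a pushoff of $\delta$.

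The main obstacle is verifying that the surgery strictly decreases the intersection count with $\alpha$, thereby contradicting minimality. To achieve this, I would choose $\delta$ to be an \emph{innermost} returning arc, in the sense that the subarc of $b_i$ between $p$ and $q$ contains no endpoint of any other returning arc, and I would push $\delta$ off $\alpha$ on the side matching the local position of $b_i^{xp}$ and $b_i^{qy}$ so that the two small correction arcs near $p$ and $q$ introduce no new crossings. The crossings at $p$ and $q$ (and any in the interior of the replaced arc) are then genuinely removed. This combinatorial bookkeeping is the technical heart of the argument, but it is the same kind of analysis as in the classical handlebody case, and the non-peripherality of intermediate curves (as already exploited in the proof of Claim~\ref{claim:curvereplacement}) allows everything to remain inside the $2n$-punctured sphere setting.
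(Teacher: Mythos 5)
Your proof is correct, and your ``if'' direction coincides with the paper's argument: both perform the standard wave surgery on an (innermost) returning arc to produce a new admissible system meeting $\alpha$ strictly fewer times, the only difference being that you phrase the descent as a minimal-counterexample argument over all admissible systems while the paper runs it as a terminating induction from a given one. The ``only if'' direction, however, takes a genuinely different route. The paper argues algebraically: the curves $c_j$ dual to the arcs $b_j$ freely generate $\pi_1(B^3-\tau)$, the word spelled out by the successive intersections of $\alpha$ with $\bigcup_j b_j$ represents the trivial element, hence (being nonempty) is not reduced, and a cancelling juxtaposition $c_jc_j^{-1}$ of consecutive letters is precisely a returning arc. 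You argue geometrically, taking an actual compressing disk $D$ for $\alpha$, removing circles of $D\cap\bigcup_i E_i$ by innermost-disk surgery using irreducibility, and extracting the returning arc from an arc of $D\cap\bigcup_i E_i$ that is outermost on $D$; the outermost subdisk forces both endpoints onto a single $b_i$, approached from the same side, and its interior being disjoint from the system makes the cut-off subarc of $\alpha$ a genuine wave. Both are standard adaptations of Masur's handlebody argument: the algebraic version is shorter to state, while your geometric version avoids having to identify the dual curves as a free generating set and makes the ``same side'' condition on the returning arc visibly automatic.
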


\begin{proof}  
Suppose that $\alpha \in  D(T)$ and $\alpha \cap \{E_{1}, E_{2}, \cdots, E_{n}\} \neq \emptyset$, that is,  $\alpha \cap \{b_{1}, b_{2}, \cdots, b_{n}\} \neq \emptyset$.  
Consider the dual of $\{E_{1}, E_{2}, \cdots, E_{n}\}$ in $\partial B^{3}- \tau$, that is a set of simple closed curves $c_{j}$ in $ \partial B^{3}- \tau$, such that the intersection number between of $c_{j}$ and $b_{i}$ is $\delta_{ij}$, then $c_{j}$ give a set of free generators on $\pi_{1}(B^{3}- \tau)$. 
Since $\alpha \in  D(T)$, it is trivial in $\pi_{1}(B^{3}- \tau)$, as a word in $\{c_{j}\}^{n}_{j=1}$, there is a juxtaposition $c_{j}c_{j}^{-1}$, that is,  $\alpha$ has a returning arc with respect to $b{_j}$.

For the other side, if $\alpha \in \mathcal{C}(\partial T)$ such that for each  admissible system  $\{E_{1}, E_{2}, \cdots, E_{n}\}$ of $T$, either $\alpha$ is disjoint from  $E_{1}, E_{2}, \cdots, E_{n}$, or there is a returning arc of $\alpha$ with respect to $\{E_{1}, E_{2}, \cdots, E_{n}\}$. 
From an innermost  returning arc, we can perform a surgery to get a new admissible system$\{E^{*}_{1}, E^{*}_{2}, \cdots, E^{*}_{n}\}$, and $|\alpha \cap \{E^{*}_{1}, E^{*}_{2}, \cdots, E^{*}_{n}\}|  <  | \alpha \cap \{E_{1}, E_{2}, \cdots, E_{n}\}|$. 
Inductively, we have an admissible system  such that $\alpha$ is disjoint from it, which means $\alpha$ bounds a disk in $T$. \end{proof}

\begin{proof}[Proof of Theorem \ref{thm:limitset}]
Together with Lemma \ref{lemma:disk}, the proof of Theorem \ref{thm:limitset} runs like the proof of \cite[Theorem 2.1]{Masur:1986} word by word, except that $D(T)$  has empty interior. 
Here we give an outline of the proof that $D(T)$ has empty interior. 
Take neighborhood of $b_{i}$ in $\partial B^{3}$,  we denote the boundary by $c_{i}$, and then extends $\{c_{i}\} ^{n}_{i=1}$ to a pants decomposition of $b_{i}$ in $\partial B^{3} - \cup^{n}_{i=1}\partial b_{i}$. 
With this pants-decomposition, if we take a curve $\varphi$ as in the proof of \cite[Theorem 2.1]{Masur:1986}, then $D(T)$ is shown to have empty interior in  $\mathcal{PML}(S_{0,2n})$. 
\end{proof}

Next we consider Theorem \ref{thm:measurezeroharmonic}. 
To prove the theorem, we start with the following lemma. 

\begin{lemma}  \label{lemma:characterization of limit set} 
A projective measured lamination $\beta$ lies in the limit set of the Hilden group if and only if for any admissible system  $\{E_{1}, E_{2}, \cdots, E_{n}\}$ for $T$, either $\beta  \cap (E_{1}, E_{2}, \cdots, E_{n}) = \emptyset$, or there is an index $i$, such that $\beta$ has a returning arc with respect to $\partial E_{i}$. 

\end{lemma}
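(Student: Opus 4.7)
The plan is to follow the proof of the corresponding result for the handlebody group by Masur in \cite{Masur:1986}, substituting meridian disks by the half-disks of an admissible system. By Theorem \ref{thm:limitset}, the limit set of $\mathcal{T}$ coincides with the closure $\overline{D(T)}$ in $\mathcal{PML}(S_{0,2n})$, so it suffices to characterize $\overline{D(T)}$ via the returning-arc condition stated in the lemma.

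For the ``only if'' direction, I would take $\beta = \lim [\alpha_k]$ with $\alpha_k \in D(T)$ and fix an admissible system $\{E_1,\dots,E_n\}$. Continuity of the geometric intersection number $i(\,\cdot\,, \partial E_i)$ on $\mathcal{PML}$ disposes of the case where $i(\beta, \partial E_i) = 0$ for every $i$: then $\beta$ is disjoint from the system. Otherwise, for all sufficiently large $k$ the curve $\alpha_k$ must meet some $\partial E_j$, so by Lemma \ref{lemma:disk} it has a returning arc with respect to some index; passing to a subsequence makes this index uniform in $k$, and the returning arcs, regarded as subarcs of $\partial B^3 \setminus \tau$ with both endpoints on the same side of $b_j$, converge in the Hausdorff topology to a returning arc carried by $\beta$.

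For the ``if'' direction, the strategy is to approximate $\beta$ by rational weighted multicurves $\gamma_k$ supported on a train track carrying $\beta$. A standard perturbation argument shows that for every fixed admissible system, all sufficiently close $\gamma_k$ either are disjoint from that system (when $\beta$ is) or inherit a returning arc from the one supplied by the hypothesis on $\beta$. Applying Lemma \ref{lemma:disk} componentwise then places each $\gamma_k$ in $D(T)$, so $\beta \in \overline{D(T)}$.

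The main obstacle will be to formalize ``returning arc'' for a general, non-simple-closed-curve measured lamination in such a way that the condition is simultaneously closed under limits in $\mathcal{PML}$ and stable under small perturbations toward rational points. As in \cite{Masur:1986}, I plan to phrase the condition in terms of the underlying geodesic lamination of $\beta$ and apply the innermost-arc surgery used in the proof of Lemma \ref{lemma:disk} to individual leaves; with this setup, both directions of the equivalence reduce to the already-established characterization for simple closed curves, and the only genuinely new ingredient is the continuity/approximation package above.
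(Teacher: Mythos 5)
Your overall strategy coincides with the paper's: the proof given there is simply the observation that, once ``meridian system'' is replaced by ``admissible system'' and ``wave'' by ``returning arc'', the argument of Lemma 1.1 of \cite{Masur:1986} goes through verbatim, and your continuity-of-intersection-number and rational-approximation steps are exactly the content of that argument. The one place where you genuinely depart from the paper is the opening reduction: you invoke Theorem \ref{thm:limitset} to replace the limit set of $\mathcal{T}$ by the closure of $D(T)$ and then characterize the latter. Be careful here: in Masur's paper the analogue of the present lemma is an ingredient in the proof of the analogue of Theorem \ref{thm:limitset} (the wave criterion is what allows one to push an arbitrary orbit accumulation point back onto the closure of the disk set), so if Theorem \ref{thm:limitset} is proved ``as in Masur's Theorem 2.1'' your reduction risks being circular; the safer route, and the one the paper intends by citing Masur's Lemma 1.1 directly, is to prove the present characterization for orbit accumulation points of $\mathcal{T}$ from scratch and to treat Theorem \ref{thm:limitset} as a consequence. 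Finally, the difficulty you flag at the end --- that the Hausdorff limit of the curves $\alpha_k$ may strictly contain the support of $\beta$, so the limiting returning arc need not a priori lie on a leaf of $\beta$ --- is the real technical content of Masur's lemma; your proposal defers it rather than resolves it, but the paper's one-line proof does no more, so this is a fair stopping point provided you ultimately carry out Masur's treatment of that step with Lemma \ref{lemma:disk} in place of his surgery on meridians.
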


\begin{proof}
With the definition admissible system, the proof runs as the proof of \cite[Lemma 1.1]{Masur:1986}. 
\end{proof}

Next, from the arguments for \cite[Proposition in Page 36]{Kerckhoff:1990}, we have the following. 

\begin{lemma}  \label{lemma:traintrack1} 
There is a recurrent maximal train track $\sigma$ in $S_{0,2n}$ such that, for the trivial tangle $T$ with the boundary $S_{0,2n}$, there is an admissible system $C$ for $T$, and a train track $\sigma'$ which is obtained from $\sigma$ by spilt at most  $18(n-1)$ times, such that no  lamination carried by $\sigma'$ has a returning arc   with respect to $C$.
\end{lemma}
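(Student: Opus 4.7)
The plan is to adapt Kerckhoff's construction from his 1990 paper (where he treats the handlebody case) to the trivial tangle setting, translating meridian curves on a handlebody boundary to the arcs $b_i$ of an admissible system on the boundary sphere of $T$.

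First, I would fix an explicit admissible system $C = \{E_1,\dots,E_n\}$ for $T$. Taking the strings $\tau$ in a nice standard position, choose the half-disks so that the arcs $b_1,\dots,b_n$ on $\partial B^3$ lie in a single ``equatorial" band that cuts $S_{0,2n}$ into two simpler pieces, each containing $n$ of the punctures. This gives $S_{0,2n}$ a combinatorial decomposition to which a maximal train track can be adapted.

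Next, I would construct a maximal recurrent train track $\sigma$ on $S_{0,2n}$ following Kerckhoff's recipe. Its branches are organized so that, near each arc $b_i$, they either cross $b_i$ transversally or travel nearly parallel to $b_i$; the complementary regions are triangles together with one once-punctured monogon at each of the $2n$ punctures. Recurrence is ensured by a suitable tangential weight system. The key local property of the construction is that a returning arc of a carried lamination with respect to some $b_i$ forces a specific switch of $\sigma$ to violate the tangency condition along a small ``offending'' branch; these offending branches can be enumerated and bounded by a linear function of $n$, namely at most $18(n-1)$ — the factor $18$ being the worst-case count of offending branches per pair of pants in the underlying pants decomposition of $S_{0,2n}$, while $n-1$ counts the relevant pants.

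Then I would define $\sigma'$ by splitting $\sigma$ once at each offending branch. Each split is a standard elementary train track move, so $\sigma'$ remains maximal and carries a subset of the laminations carried by $\sigma$. Using Lemma \ref{lemma:disk} and the characterization in Lemma \ref{lemma:characterization of limit set}, I would verify that any lamination carried by $\sigma'$ crosses some $E_i$ transversally without ever turning back, so no returning arc with respect to $C$ can exist. Since at most $18(n-1)$ splits were performed, $\sigma'$ satisfies the bound required by the lemma.

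The main obstacle will be the combinatorial bookkeeping that establishes the explicit constant $18(n-1)$: Kerckhoff's argument was designed for a closed genus-$g$ surface bounding a handlebody (where the analogous constant is linear in $g$), and in the trivial tangle case the punctures play the role of the ``extra topology,'' so one must carefully redo the count of potential returning-arc configurations on a $2n$-punctured sphere rather than a closed surface. Verifying that the construction of $\sigma$ is indeed recurrent and maximal while admitting only finitely many such offending branches, all of them eliminated by a single split each, is the delicate technical step; the rest of the argument is essentially a direct transcription of Kerckhoff's proof via Lemmas \ref{lemma:disk} and \ref{lemma:characterization of limit set}.
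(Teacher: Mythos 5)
Your proposal takes essentially the same route as the paper: the paper's entire proof of this lemma is the single sentence that, with the definition of admissible system in place, the statement ``can be shown in the same way as'' Kerckhoff's Proposition on page 36 of his 1990 paper, i.e.\ exactly the adaptation from meridian systems on a handlebody boundary to the arcs $b_i$ of an admissible system that you describe. Be aware only that your specific accounting for the constant $18(n-1)$ (``$18$ offending branches per pair of pants times $n-1$ pants'') is your own reconstruction and is not derived in the paper, so it should be checked against Kerckhoff's actual splitting count before being asserted.
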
 

\begin{proof}
With the definition of  the admissible system, it can be shown in the same way as \cite[Proposition in Page 36 ]{Kerckhoff:1990}.
\end{proof}

This lemma directly implies the following:

\begin{lemma}  \label{lemma:traintrack2} There is a recurrent maximal train track $\sigma$ in $S_{0,2n}$, such that 
for the trivial tangle $T$ with boundary $F_{0,2n}$, $\sigma$ can be split at most $18(n-1)$ times to a train track $\sigma'$, which is disjoint from the disk set of the the tangle $T$. \qed
\end{lemma}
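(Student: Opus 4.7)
The plan is to read Lemma \ref{lemma:traintrack2} as a direct translation of Lemma \ref{lemma:traintrack1} through the characterization of $D(T)$ in Lemma \ref{lemma:disk}. First I would take the data produced by Lemma \ref{lemma:traintrack1}: a recurrent maximal train track $\sigma$ on $S_{0,2n}$, an admissible system $C=\{E_1,\dots,E_n\}$ for $T$, and a train track $\sigma'$ obtained from $\sigma$ by at most $18(n-1)$ splits such that no lamination carried by $\sigma'$ has a returning arc with respect to $C$. The goal is to upgrade the latter property into disjointness of $\sigma'$ from $D(T)$.

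I would then argue by contradiction: suppose some simple closed curve $\alpha\in D(T)$ is carried by $\sigma'$. By Lemma \ref{lemma:disk} applied to the admissible system $C$, either $\alpha$ is disjoint from $E_1\cup\cdots\cup E_n$ or $\alpha$ has a returning arc with respect to $C$. The second alternative is excluded outright by the defining property of $\sigma'$. For the first alternative, observe that the arcs $b_i=\partial E_i\cap\partial B^3$ together with the $2n$ punctures cut $S_{0,2n}$ into a single disk, so any essential simple closed curve on $S_{0,2n}$ disjoint from $b_1\cup\cdots\cup b_n$ must be peripheral to a puncture. A recurrent maximal train track on $S_{0,2n}$, whose complementary regions are trigons and once-punctured monogons, carries no peripheral curves, and this property is preserved by splitting. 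Hence $\alpha$ cannot be carried by $\sigma'$, which is the desired contradiction.

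Passing from the contradiction above to the statement ``$\sigma'$ is disjoint from the disk set'' is purely a matter of interpretation: the vertices of $D(T)$ in $\mathcal{C}(\partial T)$ are the isotopy classes of disk-bounding simple closed curves, so the statement that none of these is carried by $\sigma'$ is exactly disjointness of $\sigma'$ from $D(T)$. I expect that the only delicate step in a fully written-out argument is the peripheral-curve exclusion under the ``disjoint from $C$'' alternative, since the rest of the argument is a direct rephrasing of Lemma \ref{lemma:traintrack1} via Lemma \ref{lemma:disk}; everything else is bookkeeping, and the split-count $18(n-1)$ is inherited without change.
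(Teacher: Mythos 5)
Your reduction of the lemma to Lemma \ref{lemma:traintrack1} via the characterization in Lemma \ref{lemma:disk} is the right framing, and the returning-arc alternative is handled correctly. The gap is in your treatment of the first alternative. It is not true that the arcs $b_1,\dots,b_n$ together with the punctures cut $S_{0,2n}$ into a single disk: cutting a sphere along $n$ pairwise disjoint embedded arcs yields a planar surface with $n$ boundary circles (each containing two punctures), which for $n\ge 2$ is not a disk and contains many essential, non-peripheral curves. Worse for your argument, such curves genuinely lie in $D(T)$ --- for instance the boundary of a regular neighborhood of $E_i\cup a_i$ is a non-peripheral curve encircling the two punctures at the ends of the strand $a_i$, it is disjoint from the whole admissible system, and it bounds a disk in $B^3-\tau$. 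So the ``disjoint from $C$'' alternative cannot be excluded by a peripherality argument; it is exactly the case that carries real content, and your contradiction does not go through there. You would instead need to know that the split track $\sigma'$ carries no lamination having zero intersection with $C$ (or restrict the conclusion to the part of $D(T)$ meeting $C$), which is an additional property of Kerckhoff's construction and not a consequence of the statement of Lemma \ref{lemma:traintrack1} as given.

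For comparison, the paper offers no written proof of this lemma at all --- it asserts that Lemma \ref{lemma:traintrack1} ``directly implies'' it and closes with \verb|\qed| --- so you were right to sense that something needs to be said. But note that in the subsequent proof of Theorem \ref{thm:measurezeroLebesgue} the authors treat the laminations with zero intersection with an admissible system by a \emph{separate} measure-zero argument, rather than through the train track; this is an indication that the ``disjoint'' case is not dispatched by the splitting statement, and that your attempt to absorb it into the train-track argument via peripherality is where the proof breaks down.
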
 

Now we can show the next theorem. 

\begin{thm}  \label{thm:measurezeroLebesgue}
The limit set of $\mathcal{T}$ has Lebesgue  measure zero in $\mathcal{PML}(S_{0,2n})$. 
\end{thm}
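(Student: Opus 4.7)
The plan is to adapt Masur's argument for the handlebody disk set in \cite{Masur:1986}, using Lemma \ref{lemma:traintrack2} as the key geometric input in place of Masur's analogous splitting lemma for handlebodies. The strategy is a Cantor-style iterated removal argument inside train track polytopes.

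First I would reduce to a single polytope. The interiors of the polytopes $P(\tau) \subset \mathcal{PML}(S_{0,2n})$ associated to maximal recurrent train tracks $\tau$ cover $\mathcal{PML}(S_{0,2n})$ up to a Lebesgue null set, so it suffices to prove that $\overline{D(T)} \cap P(\tau)$ is Lebesgue-null for each such $\tau$. Fixing $\tau$, splittings along branches produce a rooted tree of maximal recurrent descendants whose polytopes tile $P(\tau)$, with overlaps on codimension-one faces (hence null). By Lemma \ref{lemma:traintrack2}, suitably generalized from the particular $\sigma$ of the lemma to an arbitrary maximal recurrent train track via the Kerckhoff argument of \cite{Kerckhoff:1990}, from any node $\sigma$ in this tree one reaches within at most $N = 18(n-1)$ further splittings a descendant $\sigma'$ with $P(\sigma')$ disjoint from $D(T)$, and hence from $\overline{D(T)}$ up to boundary null sets.

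I would then iterate the removal. Set $U_0 = P(\tau)$ and inductively let $U_{k+1}$ be the union of the depth-$N$ descendants of components of $U_k$ whose polytopes still meet $\overline{D(T)}$. Then $\overline{D(T)} \cap P(\tau) \subset \bigcap_k U_k$, and each step strictly removes from every component at least one escape sub-polytope given by the preceding paragraph.

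The main obstacle is quantifying the decay. To conclude $|U_k| \to 0$, one needs a uniform multiplicative bound $|U_{k+1}| \leq (1-\delta)\,|U_k|$ with $\delta > 0$ depending only on $n$. This amounts to showing that among the finitely many combinatorial types of descendants at splitting depth $\leq N$, the escape sub-polytope provided by the lemma occupies a definite proportion of its parent's Lebesgue measure. The comparability statement needed is exactly the one established by Masur and Kerckhoff in the handlebody setting, and it transfers to our setting because the combinatorial structure of train track splittings on the $2n$-punctured sphere behaves in parallel. Once this uniform $\delta$ is in hand, geometric decay $|U_k| \leq (1-\delta)^k |P(\tau)|$ forces $\overline{D(T)} \cap P(\tau)$ to have Lebesgue measure zero, and the countable cover of $\mathcal{PML}(S_{0,2n})$ by such polytopes yields the theorem.
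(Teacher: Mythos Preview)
Your overall strategy follows the same Kerckhoff line as the paper, but there is a genuine gap precisely at the step you flag as ``the main obstacle.'' You assert that the uniform proportion bound
\[
|U_{k+1}| \le (1-\delta)\,|U_k|,\qquad \delta>0 \text{ depending only on } n,
\]
is ``exactly the one established by Masur and Kerckhoff in the handlebody setting.'' It is not. This is the very step at which Kerckhoff's original argument has a known gap: as one descends the splitting tree, the sub-polytopes become increasingly distorted, and there is no a priori reason the escape polytope furnished by Lemma~\ref{lemma:traintrack2} occupies a \emph{uniform} proportion of its parent. Bounded depth of the escape splitting (your $N=18(n-1)$) does not by itself control the Lebesgue ratio, because the Jacobian of the splitting maps is unbounded over the tree. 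The paper handles this by invoking Gadre's uniform distortion theorem for Rauzy sequences on complete non-classical interval exchanges \cite{Gadre:2012}, which is exactly the missing uniformity you need; without it your geometric decay does not follow.

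There is a second, smaller issue. Lemma~\ref{lemma:traintrack2} as stated produces \emph{one} maximal recurrent $\sigma$ with the escape property; your iteration requires the same conclusion from \emph{every} node of the splitting tree. You wave at this (``suitably generalized \ldots\ via the Kerckhoff argument''), and indeed Kerckhoff's proposition is meant to apply to an arbitrary maximal recurrent track, but you should say so and check it in the tangle setting rather than cite a lemma that only asserts existence. The paper sidesteps part of this by first using Lemma~\ref{lemma:characterization of limit set} to reduce to the set $\mathcal{R}(T)$ of laminations with returning arcs for every admissible system, and then covering $\mathcal{PML}(S_{0,2n})$ by finitely many complete non-classical interval exchanges on which Gadre's theorem applies directly; this packaging avoids having to track uniformity through an infinite splitting tree by hand.
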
 

\begin{proof} 
With Lemma \ref{lemma:characterization of limit set}, the proof of Theorem \ref{thm:measurezeroLebesgue} just runs in the same line as the proof of \cite[Theorem 3]{Kerckhoff:1990}. 

Let $\mathcal{R}(T)$ be the set of projective measured lamination $\beta $ in $\mathcal{PML}(S_{0,2n})$ such that for every admissible system  $\{E_{1}, E_{2}, \cdots, E_{n}\}$, $\beta  \cap (E_{1}, E_{2}, \cdots, E_{n}) \neq \emptyset$, and there is a possible returning arc with respect to some $ \partial E_{i}$.

\begin{claim}  \label{claim:returning} 
$\mathcal{R}(T)$ has Lebesgue measure zero in $\mathcal{PML}(S_{0,2n})$. 
\end{claim} 

\begin{proof}
Note that there is a gap in Kerckhoff's proof, which is filled by Gadre in \cite{Gadre:2012}. 
Also see \cite{LustigMoriah:2012}. 
In \cite{Gadre:2012}, Gadre proves a uniform distortion result about Rauzy sequence on complete non-classical exchanges, which implies a uniform distortion result in the case of splittings on complete recurrent train tracks with a single switch. 
See also \cite[Page 2478]{DunfieldThurston:2006} 
on this problem. 

Then we cover $\mathcal{PML}(S_{0,2n})$ by a finite collection of complete non-classical interval exchanges.  Then, with the help of Lemma \ref{lemma:traintrack2}, \cite[Theorem 1]{Kerckhoff:1990} also holds in our case,  and by \cite[Theorem 1.2]{Gadre:2012}, Claim \ref{claim:returning} follows.
\end{proof}

Now, by  Lemma \ref{lemma:characterization of limit set}, if $\beta$ lies in the limit set of the Hilden group, then either $\beta$ lies in $\mathcal{R}(T)$, or $\beta$ has zero intersection with any admissible system for $T$. 
However the subset of projective measured laminations in $\mathcal{PML}(S_{0,2n})$ with zero intersection with one admissible system has Lebesgue measure zero, Theorem \ref{thm:measurezeroLebesgue} holds. 
\end{proof}

Consequently, with Lemma \ref{lemma:traintrack2} in stead of \cite[Theorem 3.2]{Maher:2010}, the proof of Theorem \ref{thm:measurezeroharmonic} runs in the same line as the proof of \cite[Theorem 3.1]{Maher:2010},  which states the following: 
Let $(G, \mu)$ be a random walk on the mapping class group of an orientable surface of finite type, which is not a sphere with three or fewer punctures, such that the semi-group generated by $\mu$ contains a complete subgroup. Then the harmonic measure of the disk set is zero.

\section{Bridge  distance}\label{sec:bridgedistance}

In this section, we show that a link $L$ in $S^3$ with a bridge sphere $F$ 
for which the Hempel distance $d(L,F)$ is at least 3 is a hyperbolic link. 

We will basically use the Bachman-Schleimer's criterion for a link to be hyperbolic obtained in \cite{BachmanSchleimer:2005}. 
To state the result in \cite {BachmanSchleimer:2005}, we recall the following, which is a brief summery of \cite[Section 2]{BachmanSchleimer:2005}. 

Let $L$ be a link in a closed orientable 3-manifold $M$ with a Heegaard surface $F \subset M$. 
We put $L$ \textit{in bridge position with respect to $F$}, that is, $L$ meets each of the handlebodies, say $H$ and $H'$, bounded by $F$ in a collection of trivial arcs (i.e., boundary parallel arcs). 
As notation, set $M_L = M - N(L)$, where $N(L)$ denotes a regular neighborhood of $L$, $H_L = H \cap M_L$, $H'_L = H' \cap M_L$, and $F_L = F \cap M_L$. 

We first prepare the 1-complex $\mathcal{AC} (F_L )$, called the \textit{arc and curve complex} of $F_L$, constructed as follows: 
For each proper isotopy class of essential curves in $F_L$  (i.e., not bounding a disk in $F_L$ and not boundary parallel in $F_L$), there is a vertex of $\mathcal{AC}(F_L)$. 
There is an edge of $\mathcal{AC}(F_L)$ between two distinct vertices if and only if there are representatives of the corresponding isotopy classes which are disjoint. 

Next let $V$ and $V'$ denote the sets of vertices of $\mathcal{AC} (F_L )$ corresponding to curves (properly embedded loops and arcs) that \textit{bound compressions} in $H_L$ and $H'_L$, respectively.
We say that a curve $\gamma$ in $F_L$ bounds a compression in $H_L$ (resp. in $H'_L$) if it is essential in $F_L$ and $\gamma = \partial E - \partial M_L = F_L \cap E$ holds for a \textit{cut surface} $E \subset H_L$ (resp. $E \subset H'_L$). 
An embedded surface $E$ in $M_L$ is called a cut surface if it is either 
(1) a disk $E \subset M_L$ such that $E \cap \partial M_L = \emptyset$, 
(2) a bigon $E \subset M_L$ such that $E \cap \partial M_L$ is an arc, or
(3) an annulus $E \subset M_L$ with exactly one meridional boundary component on $\partial M_L$. 
Here the disk appearing in (1) is called an \textit{essential disk} in $H_L$ or a \textit{compressible disk} for $F_L$. 

Then the \textit{Bachman-Schleimer distance} $d_{BS} (L, F)$ of $L$ with respect to $F$ is defined to be the number of edges in the shortest path from $V$ to $V'$ in $\mathcal{AC} (F_L )$. 
Now we can state the following given by Bachman and Schleimer in \cite{BachmanSchleimer:2005}. 

\begin{quote}
\cite[Corollary 6.2]{BachmanSchleimer:2005}  
If $d_{BS} (L, F)$ of a link $L$ with respect to a Heegaard surface $F$ is at least 3, then $L$ is a hyperbolic link. 
\end{quote}

\begin{remark} Even through the statement of Corollary 6.2 of \cite{BachmanSchleimer:2005} just for knot, but actually, the proof also holds for links in any manifolds. 
See \cite[Corollary 1.5]{Jang:2014} also. 
\end{remark}

On the other hand, in this paper, we consider not the arc and curve complex but the curve complex. 
Under the same setting as above, 
the \textit{curve complex} $\mathcal{C}(F_L)$ of $F_L$ is defined as the full subcomplex in $\mathcal{AC}(F_L)$ spanned by the vertices represented by simple closed curves in $F_L$. 

We define the \textit{disk set} $D (H,L)$ of $H_L$ as $V \cap \mathcal{C}(F_L)$ and $D (H',L)$ of $H'_L$ in the same way. 
Then the \textit{Hempel distance} $d(L, F)$ of $L$ with respect to $F$ is defined to be the number of edges in the shortest path from $D(H,L)$ to $D(H',L)$ in $\mathcal{C} (F_L )$. 

In general, it is well-known that $d_{BS} (L, F) \le d (L, F) \le d_{BS} (L, F) + 2$. 
In particular, in the case that $L$ is a link in the 3-sphere $S^3$ and $F$ is a bridge sphere F, 
it is shown in \cite[Proposition 1.2]{Jang:2014} that $d(L,F)=d_{BS}(L,F)$ if $d_{BS}(L,F) \ge 1$ and $d(L,F)=0$ or $1$ if $d_{BS}(L,F)=0$. 

Consequently we have:

\begin{thm}\label{thm: largedistancehyp} 
If the Hempel distance $d(L,F)$ of a link $L$ in $S^3$ with respect to a bridge sphere $F$ is at least 3, then $L$ is a hyperbolic link. 
\end{thm}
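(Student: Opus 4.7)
The plan is to derive Theorem \ref{thm: largedistancehyp} as a short logical combination of the two results already recorded in the preceding discussion: the Bachman--Schleimer hyperbolicity criterion \cite[Corollary 6.2]{BachmanSchleimer:2005} (together with the link-generalization noted in \cite[Corollary 1.5]{Jang:2014}) and Jang's comparison between the Hempel distance $d(L,F)$ and the Bachman--Schleimer distance $d_{BS}(L,F)$ in the bridge-sphere-in-$S^3$ setting \cite[Proposition 1.2]{Jang:2014}. So the argument will be a short deduction, not a fresh geometric analysis.

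The first step is to translate the hypothesis. By \cite[Proposition 1.2]{Jang:2014}, for a link $L \subset S^3$ with bridge sphere $F$, one has $d(L,F) = d_{BS}(L,F)$ whenever $d_{BS}(L,F) \ge 1$, and $d(L,F) \in \{0,1\}$ when $d_{BS}(L,F) = 0$. Assuming $d(L,F) \ge 3$, the latter case is excluded at once (it would force $d(L,F) \le 1$), so I conclude $d_{BS}(L,F) = d(L,F) \ge 3$. The second step is immediate: since $d_{BS}(L,F) \ge 3$, the Bachman--Schleimer criterion implies that $L$ is hyperbolic. The remark inserted after the quoted \cite[Corollary 6.2]{BachmanSchleimer:2005} takes care of extending their statement, originally phrased for knots, to the case of links in $S^3$.

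Since every ingredient has already been set up or cited, the genuine obstacle here is not technical depth but careful bookkeeping. I would want to verify explicitly (i) that Jang's comparison really does apply in our exact setting, namely to \emph{links} (not only knots) in $S^3$ and to a \emph{bridge sphere} (rather than a higher-genus Heegaard surface), and (ii) that the hyperbolicity conclusion of Bachman--Schleimer extends without modification from knots to links in $S^3$. Both points are handled by the remark above, so the result follows at once.
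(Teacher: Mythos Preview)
Your proposal is correct and matches the paper's own argument essentially verbatim: the theorem is stated there as an immediate consequence (``Consequently we have'') of the quoted \cite[Corollary 6.2]{BachmanSchleimer:2005}, its extension to links, and Jang's comparison \cite[Proposition 1.2]{Jang:2014} between $d(L,F)$ and $d_{BS}(L,F)$. There is nothing to add.
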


\section{Some complements}

The following can be shown in the same way as \cite[Theorem 1.1]{Maher:2010} together with the result given in \cite{Maher:2011}. 

\begin{thm} \label{thm: randomlinkdistance}  
Let $w_{k}$ be a random walk in $\mathfrak{M}_{0,2n}$ for $n \ge 3$. 
Suppose that $w _{k}$ is generated by a finitely supported probability distribution $\mu$ on $\mathfrak{M}_{0,2n}$, 
whose support generates a semi-group containing a complete subgroup.  
There are two constant $a \geq b \geq 0$ 
such that  the probability k $w_{k}$  has distance lies in $[ak, bk]$ tends to $1$ as $k \to \infty$. 
\end{thm}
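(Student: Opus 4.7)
The plan is to apply Maher's linear progress theorem \cite[Theorem 5.5]{Maher:2010} once more, with $X=D(T)$, using the two structural results already established in this paper. Lemma \ref{lemma:quasiconvex} gives the required quasi-convexity of $D(T)$ in the curve complex, which transfers to $\hat{G}$ via the quasi-isometry \cite[Lemma 3.2]{MasurMinsky:2004}; Theorem \ref{them:41} gives the required vanishing of both harmonic and reflected harmonic measure on the limit set of $D(T)$. Hence there is a constant $\ell>0$ such that
\[
\mathbb{P}\!\left(\left|\tfrac{1}{k}\hat{d}(D(T),w_{k}D(T))-\ell\right|\le\epsilon\right)\longrightarrow 1
\]
as $k\to\infty$ for every $\epsilon>0$. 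In particular, for any $0<\epsilon<\ell$ we obtain $\hat{d}(D(T),w_{k}D(T))\ge(\ell-\epsilon)k$ with probability tending to $1$, which yields the lower bound $b=\ell-\epsilon>0$.

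For the matching upper bound I would use the finite support of $\mu$. Fix a finite generating set $A$ of $G=\mathfrak{M}_{0,2n}$ containing $\mathrm{supp}(\mu)$. Since $w_{k}$ is a product of $k$ elements of $\mathrm{supp}(\mu)$, the word length $|w_{k}|_{A}$ is at most $k$, and therefore the relative length of $w_{k}$ in $A\cup\mathcal{H}$ is also at most $k$. As $w_{k}$ acts on $\hat{G}$ by isometries, this forces the deterministic estimate $\hat{d}(D(T),w_{k}D(T))\le k$, so an upper constant $a$ (of the form $1$ at the level of $\hat{d}$, and a quasi-isometry multiple of $1$ after translating to the curve complex) exists.

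To pass from $\hat{d}$ to the ``distance'' named in the statement, namely the Hempel distance $d(\overline{w_{k}},F)$ of the resulting bridge splitting (or equivalently the distance in $\mathcal{C}(\partial T)$ between $D(T)$ and $w_{k}D(T)$), one invokes the quasi-isometry between $\hat{G}$ and $\mathcal{C}(\partial T)$ recalled in the outline and used in Section \ref{sec:bridgedistance}. The quasi-isometry constants modify $a$ and $b$ by multiplicative and additive factors but preserve the two-sided linear bound for all sufficiently large $k$. Combining these estimates gives constants $a\ge b>0$ for which the probability that the distance of $w_{k}$ lies in $[bk,ak]$ tends to $1$ as $k\to\infty$, as desired (interpreting the statement's interval as $[bk,ak]$, the natural reading given $a\ge b$).

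The main obstacle is not a new technical one but rather bookkeeping: one must check that the quasi-isometry constants do not degrade the linear lower bound into a sublinear one, and that the comparison between $\hat{d}(D(T),w_{k}D(T))$ and the Hempel distance stated in Section \ref{sec:bridgedistance} is uniform in $k$. Both are standard consequences of the fact that a quasi-isometry changes a linear function by only a bounded affine amount, so the real work has already been done in Lemma \ref{lemma:quasiconvex}, Theorem \ref{them:41}, and the invocation of \cite[Theorem 5.5]{Maher:2010}; the remainder is formal.
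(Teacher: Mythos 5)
Your proposal is correct and matches the paper's intent: the paper offers no written proof of this theorem beyond the remark that it follows ``in the same way as \cite[Theorem 1.1]{Maher:2010}'', and that argument is precisely the one you spell out --- apply \cite[Theorem 5.5]{Maher:2010} with $X=D(T)$ using Lemma \ref{lemma:quasiconvex} and Theorem \ref{them:41}, get the deterministic linear upper bound from the finite support of $\mu$, and transfer through the quasi-isometry to the curve complex. Your reading of the garbled interval as $[bk,ak]$ is also the intended one.
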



By using this, together with \cite[Theorem 5.1]{BachmanSchleimer:2005} and our results in this paper, we have the following. 

\begin{thm} \label{thm: randomlinksurface}  
Let $w_{k}$ be a random walk in $\mathfrak{M}_{0,2n}$ for $n \ge 3$. 
Suppose that $w _{k}$ is generated by a finitely supported probability distribution $\mu$ on $\mathfrak{M}_{0,2n}$, 
whose support generates a semi-group containing a complete subgroup.  
For any $m \in Z$,  the probability that the complement of $\overline{w _{k}}$ in $S^{3}$ contains an essential  surface, not  with Euler characteristic number at most $m$   tends to $0$ as $k \to \infty$. 
\end{thm}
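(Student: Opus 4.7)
The plan is to combine Theorem \ref{thm: randomlinkdistance} with the Bachman--Schleimer bound \cite[Theorem 5.1]{BachmanSchleimer:2005}, which is the bridge-position analogue of Hartshorn's theorem: namely, if a link $L$ in bridge position with respect to $F$ admits an essential surface $S$ in its exterior, then the Bachman--Schleimer (hence Hempel) distance $d(L,F)$ is bounded above by a linear function of $-\chi(S)$. In particular, for fixed $m \in \mathbb{Z}$, there is a constant $C = C(m)$ such that the presence of any essential surface in the complement with Euler characteristic $\chi \geq m$ forces $d(L,F) \leq C$.

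First I would fix $m \in \mathbb{Z}$ and let $C = C(m)$ be the constant given by \cite[Theorem 5.1]{BachmanSchleimer:2005} together with the comparison between $d$ and $d_{BS}$ recalled in Section \ref{sec:bridgedistance} (in particular the fact from \cite[Proposition 1.2]{Jang:2014} that $d$ and $d_{BS}$ differ by at most an additive constant). Next I would apply Theorem \ref{thm: randomlinkdistance}, which furnishes constants $a \geq b > 0$ with
\[
\mathbb{P}\bigl(\, d(\overline{w_k}, F) \in [bk, ak]\, \bigr) \longrightarrow 1 \text{ as } k \to \infty.
\]
For all sufficiently large $k$, $bk > C$, so the event $\{d(\overline{w_k},F) > C\}$ has probability tending to $1$.

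Finally, contrapositively, on this event no essential surface in the complement of $\overline{w_k}$ can have Euler characteristic $\geq m$, since such a surface would force the distance to be at most $C$. Hence the probability that $S^3 \setminus \overline{w_k}$ contains an essential surface with $\chi > m$ tends to $0$ as $k \to \infty$, which is exactly the claim.

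The only substantive issue is making sure that \cite[Theorem 5.1]{BachmanSchleimer:2005}, which is phrased in the generality of Heegaard splittings of $3$-manifolds, applies verbatim to bridge splittings of links in $S^3$ with the Hempel distance rather than the Bachman--Schleimer distance; but this is exactly the translation carried out in Section \ref{sec:bridgedistance} (using \cite[Proposition 1.2]{Jang:2014}), so no new work is required beyond absorbing the additive discrepancy into the constant $C(m)$. This step is the only mildly delicate point; the rest of the argument is a direct combination of Theorem \ref{thm: randomlinkdistance} with the Bachman--Schleimer bound.
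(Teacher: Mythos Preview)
Your proposal is correct and follows exactly the approach indicated in the paper, which simply states that the theorem follows from Theorem~\ref{thm: randomlinkdistance} together with \cite[Theorem 5.1]{BachmanSchleimer:2005} and the results of the paper. Your write-up supplies precisely the details the paper omits, including the translation between $d$ and $d_{BS}$ via \cite[Proposition 1.2]{Jang:2014}; the only cosmetic point is that you need the lower constant $b$ in Theorem~\ref{thm: randomlinkdistance} to be strictly positive, which indeed follows from the underlying linear-progress result \cite[Theorem 5.5]{Maher:2010}.
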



\section*{Acknowledgement}

The authors would like to thank Joseph Maher for useful discussions, in particular, about his results.

\bibliography{IchiharaMa.bib}
\bibliographystyle{amsplain}

\end{document}